\documentclass[a4paper,12pt]{amsart}
\usepackage{amsmath,amssymb}
\newtheorem{thm}{Theorem}
\newtheorem{prop}[thm]{Proposition}
\def\B{\mathbb B}
\def\C{\mathbb C}
\def\CC{\mathcal C}
\def\var{\varepsilon}
\def\ds{\displaystyle}

\title{On the squeezing function and Fridman invariants}

\author{Nikolai Nikolov and Kaushal Verma}

\address{NN: Institute of Mathematics and Informatics, Bulgarian Academy of
Sciences, Acad. G. Bonchev 8, 1113 Sofia, Bulgaria \newline\indent
Faculty of Information Sciences, State University of Library Studies
and Information Technologies, Shipchenski prohod 69A, 1574 Sofia, Bulgaria}
\email{nik@math.bas.bg}

\address{KV: Department of Mathematics, Indian Institute of Science, Bangalore 560 012, India}
\email{kverma@iisc.ac.in}

\subjclass[2010]{32F45, 32T25}
\keywords{squeezing function, Fridman invariants, $h$-extendible point}
\thanks{The first named author is partially supported by the Bulgarian National Science Fund,
Ministry of Education and Science of Bulgaria under contract DN 12/2.}

\begin{document}

\begin{abstract}
For a domain $D \subset \mathbb C^n$, the relationship between the squeezing function and the Fridman
invariants is clarified. Furthermore, localization properties of these functions are obtained.
As applications, some known results concerning their boundary behavior are extended.
\end{abstract}

\maketitle

\section{Relating $s_D,$ $h^c_D$ and $h^k_D$}

\noindent Let $D\subset\mathbb{C}^n$ be a domain. The first purpose of this note is to clarify the relationship between $s_D$,
the squeezing function \cite{DGZ1}, and its dual counterparts -- the Fridman invariants $h^c_D$ and $h^k_D$ \cite{Fr1,Fr2}:
$$s_D(z)=\sup\{r : r \B_n \subset f(D), f\in\mathcal O(D,\B_n), f(z) = 0, f\text{ is injective}\},$$
and
$$h^d_D(z) = \sup\{\tanh r: B^d_D(z, r)\subset f(\B_n), f\in\mathcal O(\B_n, D), f\text{ is injective}\},$$
where $\B_n\subset\C^n$ is the unit ball, $d_D$ is the Carath\'{e}odory/Kobayashi pseudodistance $c_D/k_D,$
and $B^d_D(z, r)$ is the $d_D$-ball centered at $z$ with radius $r$ (in \cite{Fr1,Fr2}, $\sup\tanh r$ is replaced by $\inf 1/r$).
\smallskip

\noindent{\it Remark.} We set $s_D=0$ if $D$ is not biholomorphic to a bounded domain. We also allow $h^d_D(z)=0.$
\smallskip

Many properties and applications of $s_D$ have been recently explored by various authors,
see e.g. \cite{DGZ1,DGZ2,FW1,FW2,Nik,Z1,Z2,Z3} and the references therein; for $h^c_D$ and $h^k_D$ see \cite{Fr1,Fr2,MV}.

Let $\Delta$ be the unit disc. Recall that
$$c_D (z,w) = \sup \{\tanh^{-1}|\lambda|: f \in \mathcal O(D, \Delta), f(z) = 0, f(w)=\lambda \},$$
and $k_D$ is the largest pseudodistance that does not exceed the Lempert function $l_D:$
$$l_D (z,w)=\inf \{\tanh^{-1}|\lambda|: f \in\mathcal O(\Delta, D), f(0) = z, f(\lambda) = w \}.$$

The construction of $s_D,$ $h^c_D$ and $h^k_D$ suggests that there should be a natural relation between them.
This is indeed the case.

\begin{prop}\label{ineq}
Let $D\subset \mathbb C^n$ be a domain. Then
$$s_D\le h^c_D\le h^k_D.$$
\end{prop}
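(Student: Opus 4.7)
The plan is to obtain both inequalities by converting an admissible competitor for one invariant into an admissible competitor for the next, using only the contractivity of the Carath\'eodory pseudodistance and the comparison $c_D\le k_D$.

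First I would dispatch $h^c_D\le h^k_D$. Since $c_D\le k_D$ on $D\times D$, every Kobayashi ball $B^k_D(z,r)$ is contained in the corresponding Carath\'eodory ball $B^c_D(z,r)$ of the same radius. Hence any injective $f\in\mathcal O(\B_n,D)$ that makes $r$ admissible in the definition of $h^c_D(z)$ (i.e.\ with $B^c_D(z,r)\subset f(\B_n)$) automatically makes $r$ admissible in the definition of $h^k_D(z)$, and the inequality follows on taking the supremum.

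For the main inequality $s_D\le h^c_D$, the case where $D$ is not biholomorphic to a bounded domain is vacuous since then $s_D=0$, so I may pick $r<s_D(z)$ together with an injective $f\in\mathcal O(D,\B_n)$ satisfying $f(z)=0$ and $r\B_n\subset f(D)$. The natural competitor for $h^c_D$ is the rescaled inverse $F(w):=f^{-1}(rw)$, which is an injective element of $\mathcal O(\B_n,D)$ sending $0$ to $z$. The key verification is $B^c_D(z,\tanh^{-1}r)\subset F(\B_n)$: for any $w$ in that Carath\'eodory ball, contractivity of $c$ under $f$ gives $c_{\B_n}(0,f(w))\le c_D(z,w)<\tanh^{-1}r$, and since $c_{\B_n}(0,\zeta)=\tanh^{-1}|\zeta|$ this forces $f(w)\in r\B_n$ and hence $w\in F(\B_n)$. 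Therefore $\tanh(\tanh^{-1}r)=r$ is admissible for $h^c_D(z)$, and letting $r\nearrow s_D(z)$ completes the argument.

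I do not foresee a serious obstacle; the whole proof is essentially a matter of unwinding definitions. The one piece of calibration worth flagging is that the $\tanh r$ appearing in the definition of $h^d_D$ is precisely tuned to the identity $c_{\B_n}(0,\zeta)=\tanh^{-1}|\zeta|$, and this is exactly what allows the scaling $F(w)=f^{-1}(rw)$ to transport the radius $r$ from $s_D$ to $h^c_D$ without any loss.
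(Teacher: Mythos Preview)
Your argument is correct and follows essentially the same route as the paper: both inequalities are obtained exactly as you describe, with the rescaled inverse $F(w)=f^{-1}(rw)$ serving as the competitor for $h^c_D$ and the inclusion $B^c_D(z,\tanh^{-1}r)\subset F(\B_n)$ verified via contractivity of the Carath\'eodory pseudodistance. The only cosmetic difference is that the paper invokes the existence of an extremal map for $s_D(z)$ (so that $r=s_D(z)$ is attained), whereas you take $r<s_D(z)$ and let $r\nearrow s_D(z)$; your version is marginally more self-contained since it avoids citing the attainment result.
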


\begin{proof} The second inequality follows from the fact that $c_D\le k_D.$

\medskip

To prove the first inequality, we may assume that $s_D>0.$ By normal family arguments,
there exists an extremal mapping $f\in\mathcal O(D, \B_n)$ for $s_D(p) = r$ \cite[Theorem 2.1]{DGZ1}).
Then $g(z)=f^{-1}(rz)$ is a competitor for $h^c_D(p).$ If $q\in B^c_D(p, \tanh^{-1}r),$ then
$$\tanh^{-1}r>c_D(p, q)=k_{f(D)}(0,f(q))\ge c_{\B_n}(0,f(q))$$
which means that $f(q)\in r\B_n$, that is, $q \in g(\B_n).$ So
$$B^c_D(p,\tanh^{-1}r)\subset g(\B^n)\subset D$$
and hence $s_D(p)\le h^c_D(p).$
\end{proof}

Since the supremum in the definition of $s_D(z)>0$ is attained, then
$s_D(z)=1$ implies that $D$ is biholomorphic to $\B_n$ (in short, $D\sim\B_n$).
The last is also a consequence of Proposition \ref{ineq}
and the fact that if $D$ is $k$-hyperbolic, then $h^k_D(z)=1\Rightarrow D\sim\B_n$
\cite[Theorem 1.3(2)]{Fr2}).

\medskip

Note that $s_D$ is a continuous function \cite[Theorem 3.1]{DGZ1}). It is proved in
\cite[Theorem 1.3(1)]{Fr2} that $h^k_D$ is continuous if $D$ is $k$-hyperbolic.
In fact, more is true: $h^d_D$ is continuous for any continuous pseudodistance $d_D.$
This is a simple consequence of the observation that $B^d_D(z,r)\subset B^d_D(z,r+d_D(z,w)).$
In particular, $h_D^k$ and $h_D^c$ are always continuous.

\section{Localization of $s_D$ and $h^k_D$}

In the next section, we need two localization type results for $s_D$ and $h^k_D.$ These results
are in opposite directions.

\begin{prop}\label{locs} Let $D \subset \mathbb C^n$ be a domain and let $U$ be a neighborhood
of a plurisubharmonic peak point\footnote{This notion has a local character (see e.g. the proof
of \cite[Lemma 2.1.1]{Gau}). Hence any local holomorphic peak point is a plurisubharmonic peak point.}
$p_0\in\partial D.$ Then $$\lim_{z\to p_0}s_D(z)=1\Rightarrow\lim_{z\to p_0}s_{D\cap U}(z)=1.$$
\end{prop}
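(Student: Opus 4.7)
The plan is to use an extremal map for $s_D$ together with the classical localization of the Kobayashi pseudodistance at a plurisubharmonic peak point: the latter says that Kobayashi balls of uniformly bounded radius, based at points of $D$ approaching $p_0,$ are eventually forced into any prescribed neighborhood of $p_0.$

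Fix $r\in(0,1);$ it suffices to show $s_{D\cap U}(z)\ge r$ once $z$ is sufficiently close to $p_0,$ and letting $r\to 1$ then yields the claim. Since $s_D(z)\to 1,$ we may assume $s_D(z)>r$ and (as in the proof of Proposition \ref{ineq}) pick an injective extremal $f_z\in\mathcal O(D,\B_n)$ with $f_z(z)=0$ and $s_D(z)\B_n\subset f_z(D).$ The restriction $f_z|_{D\cap U}$ is then the natural competitor for $s_{D\cap U}(z),$ and the desired inequality reduces to the inclusion $f_z^{-1}(r\B_n)\subset U.$

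To verify this inclusion, note first that since $f_z:D\to f_z(D)$ is biholomorphic and $s_D(z)\B_n\subset f_z(D),$
$$k_D(z,w)=k_{f_z(D)}(0,f_z(w))\le k_{s_D(z)\B_n}(0,f_z(w))=\tanh^{-1}(|f_z(w)|/s_D(z))$$
for every $w\in f_z^{-1}(s_D(z)\B_n).$ Hence $f_z^{-1}(r\B_n)\subset B^k_D(z,\tanh^{-1}(r/s_D(z))),$ and this Kobayashi radius tends to $\tanh^{-1}r$ as $s_D(z)\to 1,$ so it is bounded by some finite $R_0$ for $z$ close to $p_0.$ Applying the Kobayashi-distance localization at the plurisubharmonic peak point $p_0$ (whose local character is guaranteed by the footnote; see \cite{Gau}) produces a neighborhood $V\subset U$ of $p_0$ with $B^k_D(z,R_0)\subset U$ for every $z\in V\cap D;$ combined with the preceding ball containment, this gives $f_z^{-1}(r\B_n)\subset U,$ and therefore $r\B_n\subset f_z(D\cap U),$ completing the argument. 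The only genuine ingredient beyond a direct Schwarz-Pick computation is this Kobayashi-metric localization at a plurisubharmonic peak point, so I expect that to be the step most needing careful justification---it is, however, exactly the setting handled in the Gaussier-type references and thus should be invocable verbatim.
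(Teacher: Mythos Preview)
Your proof is correct and follows the same strategy as the paper: take an extremal map for $s_D(z)$, use the Schwarz--Pick inequality on the ball to show that the preimage of a slightly smaller ball lies in a bounded-radius ball around $z$ for an invariant pseudodistance, and then invoke Gaussier's localization at the plurisubharmonic peak point to force that ball into $U$. The one point worth sharpening is your choice of $k_D$: what \cite[Lemma 2.1.1]{Gau} literally yields is the disc-trapping statement (every analytic disc $\psi$ with $\psi(0)=z$ near $p_0$ satisfies $\psi(t\Delta)\subset D\cap U$), which translates immediately into a lower bound on the \emph{Lempert function} $l_D$, not on $k_D$. The paper accordingly runs the whole argument with $l_D$ and obtains the clean multiplicative inequality $s_{D\cap U}(z)\ge t\cdot s_D(z)$ for $z$ near $p_0$. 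Your computation goes through verbatim with $l_D$ in place of $k_D$ (the upper bound $l_D(z,w)=l_{f_z(D)}(0,f_z(w))\le\tanh^{-1}(|f_z(w)|/s_D(z))$ holds just the same), so this is only a cosmetic adjustment---but it makes the appeal to \cite{Gau} exact rather than one inference removed, and avoids the extra justification you flagged in your last sentence.
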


\noindent{\it Remark.} The inverse implication cannot be true without global assump\-tions about $D.$

\begin{proof} Let $t\in(0,1).$ It follows by the proof of \cite[Lemma 2.1.1]{Gau}
that there exists a neighborhood $V$ of $p_0$ such that
if $\psi\in\mathcal O(\Delta,D)$ and $\psi(0)=z\in D\cap V,$ then
$\psi(t\Delta)\subset D\cap U.$ In particular,
\begin{equation}\label{3}
l_D(z,w)\ge\tanh^{-1}t,\quad z\in D\cap V,\ w\in D\setminus U.
\end{equation}

Let now $f\in\mathcal O(D,\B_n)$ be an extremal mapping for $s_D(z)=r,$
$z\in D\cap V.$ If $w\in f^{-1}(tr\B_n),$ then
$$l_D(z,w)=l_D(0,f(w))\le l_{r\B_n}(0,f(w))<\tanh^{-1}t.$$
This and \eqref{3} show that $f(tr\B_n)\subset U$ and hence
$$s_{D\cap U}(z)\ge t.s_D(z)$$
which implies the desired result.
\end{proof}

\begin{prop}\label{lock} Let $D \subset \mathbb C^n$ be a domain and let $U$ be a neighborhood
of a local holomorphic peak point $p_0\in\partial D.$ Then
$$\lim_{z\to p_0}h^k_{D\cap U}(z)=1\Rightarrow\lim_{z\to p_0}h^k_D(z)=1.$$
\end{prop}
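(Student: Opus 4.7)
The plan is to take a map $f_z:\B_n\to D\cap U$ that nearly realizes $h^k_{D\cap U}(z)$ and, regarded as a map into $D$, use it as a competitor for $h^k_D(z)$. Since $f_z(\B_n)\subset D\cap U$, it already contains the large Kobayashi ball $B^k_{D\cap U}(z,r_z)$ with $\tanh r_z\to 1$; the point is that, for $z$ near $p_0$, it must also contain a large $k_D$-ball around $z$. This reduces the statement to a localization of the Kobayashi pseudodistance at $p_0$: a comparison of the form $k_{D\cap U}(z,w)\le k_D(z,w)+\var$ for $z$ close to $p_0$ and $w$ in a controlled range.

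Concretely, fix $r>0$; by hypothesis, for every $z$ sufficiently close to $p_0$ there is an injective $f_z\in\mathcal O(\B_n,D\cap U)$ with $B^k_{D\cap U}(z,r+1)\subset f_z(\B_n)$. The goal then becomes the inclusion
\[
B^k_D(z,r)\subset B^k_{D\cap U}(z,r+1),
\]
which requires showing that every $w\in D$ with $k_D(z,w)<r$ belongs to $D\cap U$ and satisfies $k_{D\cap U}(z,w)<r+1$. Both follow from a single Kobayashi localization statement: for any $R,\var>0$ there is a neighborhood $W$ of $p_0$ with
\[
z\in D\cap W,\ w\in D,\ k_D(z,w)\le R\ \Longrightarrow\ w\in D\cap U\text{ and }k_{D\cap U}(z,w)\le k_D(z,w)+\var.
\]

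The heart of the proof, and the main obstacle, is establishing this localization. The idea is to use the local holomorphic peak function $h$ at $p_0$: for any analytic disc $\varphi:\Delta\to D$ with $\varphi(0)=z$ close to $p_0$, $|h\circ\varphi|$ is close to $1$ at the origin, and a Schwarz-type estimate forces $\varphi(t\Delta)$ to lie in a prescribed neighborhood of $p_0$ for any fixed $t<1$. Applying this to near-extremal chains of analytic discs that realize $k_D(z,w)$ keeps the whole chain inside $D\cap U$ at a small additional Kobayashi cost. This is more delicate than the Lempert-function localization used in Proposition \ref{locs}, because one must control an entire chain rather than a single disc; the bookkeeping of the errors accumulated along the chain is where most of the work lies.

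Once the localization is in hand, the conclusion is immediate: $B^k_D(z,r)\subset B^k_{D\cap U}(z,r+1)\subset f_z(\B_n)$ gives $h^k_D(z)\ge\tanh r$ for all $z$ in some neighborhood of $p_0$; since $r>0$ is arbitrary and $h^k_D\le 1$, we obtain $\lim_{z\to p_0}h^k_D(z)=1$.
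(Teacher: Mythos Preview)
Your overall strategy matches the paper's exactly: regard a near-extremal $f_z:\B_n\to D\cap U$ as a competitor for $h^k_D(z)$, and reduce everything to an inclusion of Kobayashi balls $B^k_D(z,r)\subset B^k_{D\cap U}(z,r')$ for some $r'$ slightly larger than $r$. The difference lies solely in how that Kobayashi localization is obtained.

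You propose an additive comparison $k_{D\cap U}(z,w)\le k_D(z,w)+\var$ proved by tracking near-extremal chains of analytic discs, and you correctly flag the accumulated-error bookkeeping along the chain as the main obstacle---but you do not carry it out. The paper sidesteps this entirely by passing to the infinitesimal Kobayashi--Royden metric $\kappa_D$. From the peak-function/Schwarz argument (Gaussier's Lemmas 2.1.1 and 2.1.3) one obtains, for any $s\in(0,1)$ and $r>0$, nested neighborhoods $U_2\subset U_1\subset U$ of $p_0$ with
\[
s\,\kappa_{D\cap U}(u;X)\le \kappa_D(u;X)\qquad(u\in D\cap U_1,\ X\in\C^n),
\]
and $k_D(z,w)\ge r$ whenever $z\in D\cap U_2$, $w\in D\setminus U_1$. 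Since $k_D$ is the integrated form of $\kappa_D$, for $z\in D\cap U_2$ and $w\in B^k_D(z,r)$ one takes a \emph{single} smooth curve $\gamma$ of $\kappa_D$-length $<r$ joining $z$ to $w$; the second estimate forces $\gamma([0,1])\subset U_1$, and then integrating the first along $\gamma$ gives $k_{D\cap U}(z,w)<r/s$. Thus $B^k_D(z,r)\subset B^k_{D\cap U}(z,r/s)$, a multiplicative rather than additive loss, with no chain bookkeeping at all. Your chain argument could be pushed through (one first uses a distance lower bound to trap all intermediate points in $U_1$, then applies the disc-level Schwarz estimate there), but the step you identify as ``where most of the work lies'' is precisely the step the paper's infinitesimal route eliminates.
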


For the strongly pseudoconvex case, this proposition is contained in \ref{lock}=\cite[Lemma 3.4]{Fr2}.

\begin{proof}  Denote by $\kappa_D$ the Kobayshi--Royden pseudometric:
$$\kappa_D (z;X)=\inf\{|\lambda|: f \in \mathcal O(\Delta, D), f(0) = z, \lambda f'(0) = X \}.$$

Let $s\in(0,1)$ and $r>0.$ It follows by the proofs of \cite[Lemmas 2.1.1 and 2.1.3]{Gau}
that there exists neighborhoods $U_2\subset U_1\subset U$ such that
\begin{equation}\label{1}
s\kappa_{D\cap U}(u;X)\le\kappa_D(u;X),\quad u\in D\cap U_1,\ X\in\C^n,
\end{equation}
and
\begin{equation}\label{2}k_D(z,w)\ge r,\quad z\in D\cap U_2,\ w\in D\setminus U_1.
\end{equation}

Let $z\in D\cap U_2$ and $w\in B^k_D(z,r).$ Since $k_D$ is the integrated form of $\kappa_D,$ there is
a smooth curve $\gamma:[0,1]\to D$ joining $z$ and $w$ such that
$$\int_{0}^1\kappa_D(\gamma(t);\gamma'(t))dt<r.$$
By \eqref{2}, $\gamma[0,1]\subset U_1$ and then \eqref{1} implies that $w\in B^k_{D\cap U}(z,r/s).$
So
$$B^k_D(z,r)\subset B^k_{D\cap U}(z,r/s)$$
and the desired result follows easily.
\end{proof}

\section{Detecting strong pseudoconvexity}

Applying \cite[Lemma 3.1]{FW1}, it follows by the proof of \cite[Theorem 1.3]{DGZ2}
that if $p_0$ is a strongly pseudoconvex boundary point of a bounded domain $D\subset\C^n$ such that
$\overline D$ admits a Stein neighborhood basis, then
$$\lim_{z\to p_0}s_D(z)=1.$$

Combining this fact with Proposition \ref{ineq} and Proposition \ref{lock},
we get the following result (see also \cite[Theorem 1.1(i)]{MV} and, in the $\CC^3$-smooth setting,
\cite[Theorem 3.1]{Fr2}).

\begin{prop}\label{S} If $p_0$ is a strongly pseudoconvex boundary point of a domain $D\subset\C^n,$ then
$$\lim_{z\to p_0}h^k_D(z)=1.$$
\end{prop}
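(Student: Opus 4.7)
The plan is to combine the three ingredients already assembled in the paper. First, I would localize: since strong pseudoconvexity is a local property of $\partial D$ at $p_0$, I can choose a neighborhood $U$ of $p_0$ so small that $D\cap U$ is a bounded domain, $\partial(D\cap U)$ is strongly pseudoconvex of class at least $\mathcal C^2$ near $p_0$, and $\overline{D\cap U}$ admits a Stein neighborhood basis (which holds for bounded strongly pseudoconvex domains by the classical Diederich--Forn\ae ss result). At this step one also records that $p_0$ is a local holomorphic peak point of $D$, a standard fact proved using the Levi polynomial; in particular Proposition \ref{lock} is applicable.

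Next, I would invoke the fact quoted at the start of \S 3 (namely the combination of \cite[Theorem 1.3]{DGZ2} with \cite[Lemma 3.1]{FW1}) applied to the bounded domain $D\cap U$: because $p_0$ is a strongly pseudoconvex boundary point of $D\cap U$ and $\overline{D\cap U}$ admits a Stein neighborhood basis, one obtains
$$\lim_{z\to p_0}s_{D\cap U}(z)=1.$$

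Now Proposition \ref{ineq} applied to $D\cap U$ gives $s_{D\cap U}\le h^k_{D\cap U}\le 1$, so by squeezing,
$$\lim_{z\to p_0}h^k_{D\cap U}(z)=1.$$
Finally, Proposition \ref{lock} transfers this limit from $D\cap U$ to $D$ itself, producing $\lim_{z\to p_0}h^k_D(z)=1$, which is the claim.

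Since every step is an immediate citation of a result already at hand, there is no genuine obstacle beyond the verification that the local data at $p_0$ genuinely fit the hypotheses of the earlier propositions. The only point to be careful about is the localization: one must know that a small $D\cap U$ is really a bounded strongly pseudoconvex domain with Stein neighborhood basis and that $p_0$ is a local holomorphic peak point (hence plurisubharmonic peak point), so that the global inputs used in \cite{DGZ2,FW1} and in Proposition \ref{lock} are legitimately available. Beyond this, the argument is purely a chaining of Propositions \ref{ineq} and \ref{lock} with the already-stated $s$-boundary behavior.
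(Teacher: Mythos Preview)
Your proposal is correct and follows essentially the same approach as the paper: the paper's proof consists precisely of ``combining this fact with Proposition~\ref{ineq} and Proposition~\ref{lock},'' i.e., passing to a small $D\cap U$ to which the cited $s_D$-result of \cite{DGZ2,FW1} applies, using $s_{D\cap U}\le h^k_{D\cap U}$, and then invoking Proposition~\ref{lock} (with $p_0$ a local holomorphic peak point) to transfer the limit back to $D$. Your write-up merely makes explicit the verifications (Stein neighborhood basis for $\overline{D\cap U}$, local holomorphic peak point) that the paper leaves implicit.
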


Conversely, it turns out that strong pseudoconvexity can be characterized by the boundary
behavior of the squeezing function. For example, if $D\subset\C^n$
is a bounded convex domain with $\CC^{2,\alpha}$-smooth
boundary, then \cite[Theorem 1.7]{Z3}
$$\lim_{z\to\partial D}s_D(z)=1\Rightarrow D\mbox{ is strongly pseudoconvex}$$
($\CC^2$-smoothness is not enough as \cite[Theorem 1.1]{FW2} shows).

\medskip

Also, if $p_0$ is an $h$-extendible boundary point of a $\CC^\infty$-smooth bounded
pseudoconvex domain $D\subset\C^n,$
then \cite[Theorem 1]{Nik}
$$\lim_{z\to\partial D}s_D(z)=1\Rightarrow p_0\mbox{ is strongly pseudoconvex}.$$

Recall that a $\CC^\infty$-smooth pseudoconvex boundary point $p_0$ of finite type of a
domain $D\subset\mathbb{C}^n$ is said to be $h$-extendible (see e.g. \cite{Yu} and the references
therein) if $D$ is pseudoconvex near $p_0$ and the Catlin and D'Angelo multitypes of $p_0$ coincide.
For example, $p_0$ is $h$-extendible if the Levi form at $p_0$ has a corank at most one, or
$D$ is linearly convexifiable near $p_0$. In particular, $h$-extendibility takes place
in the strictly pseudoconvex, two-dimensional finite type, and convex finite type cases.

\medskip

The results below localize \cite[Theorem 1]{Nik} and, in some cases, \cite[Theorem 1.7]{Z3}.

\begin{prop}\label{H}
Let $p_0$ be an $h$-extendible boundary point of a domain $D\subset\mathbb{C}^n.$
If $\lim_{z\to p_0}s_{D}(z)=1$, then $\partial D$ is strongly pseudoconvex at $p_0.$
\end{prop}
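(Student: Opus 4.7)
My plan is to reduce Proposition \ref{H} to \cite[Theorem 1]{Nik} via the localization in Proposition \ref{locs}.

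First, I would observe that every $h$-extendible boundary point admits a local holomorphic peak function; this is a standard construction using the polynomial model associated to $p_0$ (see \cite{Yu} and the references therein). By the footnote to Proposition \ref{locs}, $p_0$ is therefore a plurisubharmonic peak point of $D$, so Proposition \ref{locs} yields $\lim_{z\to p_0}s_{D\cap U}(z)=1$ for every sufficiently small neighborhood $U$ of $p_0$.

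Next, I would pick $U$ small enough that $U\cap\partial D$ is $\CC^\infty$-smooth, $\overline{U\cap D}$ is compact, and $D$ is pseudoconvex in $U$. A routine patching then provides a bounded pseudoconvex domain $\tilde D$ with $\CC^\infty$-smooth boundary that coincides with $D$ on a smaller neighborhood $V$ of $p_0$; in particular $p_0$ is still an $h$-extendible boundary point of $\tilde D$, and $s_{\tilde D\cap V}(z)=s_{D\cap V}(z)$, so $\lim_{z\to p_0}s_{\tilde D\cap V}(z)=1$.

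The conclusion should then follow by applying \cite[Theorem 1]{Nik} to $\tilde D$. The main obstacle is exactly this last invocation: \cite[Theorem 1]{Nik} is stated with the global hypothesis $\lim_{z\to\partial\tilde D}s_{\tilde D}(z)=1$, while the construction above only furnishes the local version $\lim_{z\to p_0}s_{\tilde D\cap V}(z)=1$, and the remark after Proposition \ref{locs} explicitly warns that the converse implication cannot in general be upgraded. The resolution is to observe that Nikolov's proof is intrinsically local at $p_0$: it proceeds by a nonisotropic scaling at $p_0$ that depends only on the behavior of $s_{\tilde D}$ along a sequence $z_k\to p_0$ together with the local geometry of $\partial\tilde D$ near $p_0$. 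Running this local version of the scaling argument on $\tilde D$ identifies the polynomial model $M_{p_0}$ with $\B_n$, which, by the characterization of $h$-extendible models, forces $p_0$ to be strongly pseudoconvex.
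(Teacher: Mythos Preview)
Your strategy is the same as the paper's: cite \cite[Theorem A]{Yu} for the local holomorphic peak function, localize via Proposition~\ref{locs}, and then invoke \cite[Theorem 1]{Nik} on the localized domain. The paper carries this out more directly by choosing $U$ so that $D\cap U$ is itself a $\CC^\infty$-smooth bounded pseudoconvex domain and applying \cite[Theorem 1]{Nik} to $D\cap U$; note that, despite the earlier paraphrase with $\lim_{z\to\partial D}$, the hypothesis in \cite[Theorem 1]{Nik} is in fact the pointwise one $\lim_{z\to p_0}s_D(z)=1$, so Proposition~\ref{locs} already supplies exactly what is needed and your detour through a separately patched $\tilde D$ together with the appeal to the internal locality of Nikolov's scaling argument become unnecessary.
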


\begin{proof} Since $p_0$ is a local holomorphic peak point \cite[Theorem A]{Yu},
the result follows by choosing a neighborhood $U$ of $p_0$ such that $D\cap U$ to be a $\CC^\infty$-smooth
pseudoconvex bounded domain, and then applying Proposition \ref{locs} and Theorem \cite[Theorem 1]{Nik}.
\end{proof}

\begin{prop}\label{C} Let $D\subset\C^n$ be a domain that is locally convexifiable near
a $\CC^\infty$-smooth boundary point $p_0,$ and $\partial D$ contains no analytic discs
through $p_0.$ If $\lim_{z\to p_0}s_D(z)=1$, then $\partial D$ is strongly pseudoconvex at $ p_0 $.
\end{prop}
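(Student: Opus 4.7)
The plan parallels the proof of Proposition \ref{H}, substituting local convexifiability plus the absence of analytic discs for $h$-extendibility and \cite[Theorem 1.7]{Z3} for \cite[Theorem 1]{Nik}. Let $\phi$ be a biholomorphism from a neighborhood $W$ of $p_0$ onto an open set in $\C^n$ such that $\phi(D\cap W)$ is convex, and set $q_0:=\phi(p_0)$.

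First I would verify that $p_0$ is a plurisubharmonic peak point, making Proposition \ref{locs} applicable. The no-analytic-disc hypothesis transports under $\phi$ to the non-existence of analytic discs in $\partial\phi(D\cap W)$ through $q_0$; for a $\CC^\infty$-smooth convex boundary point this is equivalent to the absence of complex line segments in the boundary through $q_0$. A complex affine functional supporting $\phi(D\cap W)$ at $q_0$ then yields a local linear peak function at $q_0$, whose pullback under $\phi$ is a local holomorphic---hence, by the footnote to Proposition \ref{locs}, plurisubharmonic---peak function at $p_0$. Proposition \ref{locs} provides a neighborhood $U\subset W$ of $p_0$ with $\lim_{z\to p_0}s_{D\cap U}(z)=1$; by biholomorphic invariance of $s$, equivalently $\lim_{w\to q_0}s_{\phi(D\cap U)}(w)=1$.

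Next I would manufacture a bounded $\CC^\infty$-smooth convex domain $\Omega\subset\C^n$ agreeing with $\phi(D\cap W)$ in a fixed neighborhood $V$ of $q_0$, by extending a local convex defining function of $\phi(D)$ near $q_0$ to a smooth convex function on $\C^n$ with bounded sublevel set (gluing outside $V$ with something like $\|z-z_\ast\|^2-R^2$ via convex mollification). Applying Proposition \ref{locs} once more, this time to $\phi(D\cap U)$ at the local peak point $q_0$, gives $\lim_{w\to q_0}s_{\phi(D\cap U)\cap V}(w)=1$; after a final shrinking of $V$ one has $\phi(D\cap U)\cap V=\Omega\cap V$, hence $\lim_{w\to q_0}s_{\Omega\cap V}(w)=1$. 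Invoking \cite[Theorem 1.7]{Z3} at $q_0$ then yields strong pseudoconvexity of $\partial\Omega$ at $q_0$, which $\phi^{-1}$ transports to strong pseudoconvexity of $\partial D$ at $p_0$.

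The main obstacle is the passage between the localized limit $\lim s_{\Omega\cap V}=1$ produced above and the global boundary behavior $\lim s_\Omega=1$ nominally required by \cite[Theorem 1.7]{Z3}. The resolution, identical to that tacitly used in the proof of Proposition \ref{H}, is that the conclusion of \cite[Theorem 1.7]{Z3} at $q_0$ depends only on the germ of $\Omega$ at $q_0$: any sequence $w_k\to q_0$ with $s_{\Omega\cap V}(w_k)\to 1$ can be fed directly into the scaling argument underlying that theorem. A secondary technical point is the smooth convex extension that builds $\Omega$, which is a standard exercise in convex analysis.
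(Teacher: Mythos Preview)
Your route is genuinely different from the paper's. You aim for a uniform reduction to \cite[Theorem 1.7]{Z3}, whereas the paper splits into two cases: if $p_0$ is of finite type it falls under Proposition~\ref{H} (locally convexifiable finite-type points are $h$-extendible); if $p_0$ is of infinite type the paper runs the scaling argument directly on the localized convex domain via \cite[Proposition 6.1]{Z1} and \cite[Proposition 7.1]{Z2}, obtaining a limit domain $D_\infty$ with $s_{D_\infty}(0)=1$, hence $D_\infty\sim\B_n$, hence Gromov hyperbolic, which contradicts the affine disc in $\partial D_\infty$ forced by infinite type (\cite[Proposition 1.10]{Z1}).

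Two points in your argument do not go through as written. First, the supporting complex affine functional need not yield a peak function: the contact set of the real supporting hyperplane with $\partial\phi(D\cap W)$ is a convex set that can be a nontrivial totally real piece (for instance the real line $\{(it,0)\}$ in $\{\mathrm{Re}\,z+|w|^2<0\}$) while the boundary still contains no complex segment through $q_0$; the paper handles this by invoking \cite[Proposition 3.4]{Sib}. Second, and this is the substantive gap, after extending to $\Omega$ you control only $s_{\Omega\cap V}$ near $q_0$, never $s_\Omega$, and there is no general comparison between them---so \cite[Theorem 1.7]{Z3} is not applicable as a black box. Your proposed fix, feeding the scaling argument underlying \cite{Z3} with a sequence approaching $q_0$, is not a citation of \cite[Theorem 1.7]{Z3} at all; it is exactly the hands-on scaling the paper performs in the infinite-type case (and note the paper scales $\phi(D\cap W)$ itself, so the smooth bounded extension $\Omega$ is unnecessary). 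Once you address the obstacle you yourself flag, your argument becomes the paper's.
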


\begin{proof}
If $p_0$ is of finite type, then the result follows from Proposition \ref{H}.

\medskip

Suppose that $p_0$ is of infinite type. By the disc assumption, $p_0$ is again a local holomorphic peak point
\cite[Proposition 3.4]{Sib}. Then, using Proposition \ref{locs},
we may assume that $D$ is a convex domain. It follows by the proof of \cite[Propoisition 6.1]{Z1}
that there exists a sequence $(z_j)\subset D$ converging to $p_0$ and affine isomorphisms $A_j$ of
$\C^n$ such that $A_j(D)\to D_\infty$ in the local Hausdorff topology, and
$A_j(z_j)\to 0\in D_\infty,$ where $D_\infty$ is a convex domain, containing
no complex affine lines (by the disc assumption), and $\partial D_\infty$ contains an affine disc $\mathcal D$
(by the infinite type assumption). Since $s_{A_j(D)}(A_j(z_j))=s_D(z_j)\to 1,$ then $s_{D_\infty}(0)=1$
by \cite[Proposition 7.1]{Z2}, that is, $D_\infty\sim\B^n.$ In particular, the metric space
$(D_\infty,k_{D_\infty})$ is Gromov hyperbolic which is a contradiction to $\mathcal D\in\partial D_\infty$
\cite[Proposition 1.10=Proposition 6.1]{Z1}.
\end{proof}

\section{Estimates of $s_D$ and $h^k_D$}

When $\partial D$ has higher regularity than $\CC^2$ near a strongly
pseudoconvex point $p_0,$ quantitative lower bounds for $s_D$ hold.

Set $\delta_D(z)=\mbox{dist}(z,\partial D).$
Let $\var\in (0,1],$ $m\in\{0,1\}$ and $\ds\var_k=\frac{m+\var}{2}.$

\begin{prop}\label{ests}\cite[Theorem 1]{NT} Let $p_0$ be a $\CC^{m+2,\var}$-smooth
strongly pseudoconvex boundary point of a bounded domain $D\subset\C^n$
such that $\overline D$ admits a Stein neighborhood basis. Then
$$\limsup_{z\to p_0}\frac{1-s_D(z)}{\delta_D(z)^{\var_m}}<\infty.$$
\end{prop}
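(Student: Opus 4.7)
The plan is to produce, for each $z\in D$ sufficiently close to $p_0$, an injective holomorphic map $f_z:D\to\B_n$ with $f_z(z)=0$ whose image contains $(1-C\delta_D(z)^{\var_m})\B_n$. I would first choose holomorphic coordinates at $p_0$ in which the $\CC^{m+2,\var}$-smoothness of $\partial D$ combined with strong pseudoconvexity delivers a local defining function of the form
$$\rho(w)=-\operatorname{Re}w_n+|w'|^2+R(w),\qquad R(w)=O(|w|^{m+2+\var}).$$
For $z$ close to $p_0$, let $\pi(z)\in\partial D$ be the closest boundary point and set $\delta=\delta_D(z)$. The anisotropic dilation $T_z$ centered at $\pi(z)$, with factor $\delta^{-1/2}$ on the complex tangential directions and $\delta^{-1}$ on the complex normal, sends $T_z(D)$ to a domain $O(\delta^{\var_m})$-close on compacta to the Siegel half-space $\Sigma=\{-\operatorname{Re}w_n+|w'|^2<0\}$: the scaled remainder $\delta^{-1}R(T_z^{-1}\cdot)$ has size $\delta^{-1}\cdot\delta^{(m+2+\var)/2}=\delta^{\var_m}$ on the unit tangential scale, which is exactly the exponent of the statement.

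Next I would compose with the Cayley transform $\Sigma\to\B_n$ followed by a Möbius automorphism of $\B_n$ sending the image of $z$ to $0$. This yields a local biholomorphism from a neighborhood of $p_0$ in $D$ onto a neighborhood of a boundary point of $\B_n$ that differs from the standard model by $O(\delta^{\var_m})$. To turn this local model into a global competitor in the definition of $s_D$, I would invoke the Stein neighborhood basis of $\overline D$ in the spirit of \cite[Lemma 3.1]{FW1} and the proof of \cite[Theorem 1.3]{DGZ2} already used at the start of \S3: via $\overline\partial$-techniques or holomorphic approximation on a Stein neighborhood of $\overline D$, the local almost-biholomorphism can be extended to a global injective map $f_z:D\to\B_n$ whose image misses at most a $C\delta^{\var_m}$-thin shell near $\partial\B_n$. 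Bounding the distance from $0$ to $\partial f_z(D)$ below by $1-C\delta^{\var_m}$ then yields the claimed $\limsup$ estimate.

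The main obstacle is the quantitative globalization: one must pass from a local approximate biholomorphism with error $O(\delta^{\var_m})$ to a global injective map $D\to\B_n$ without degrading this rate. Preserving injectivity and maintaining a lower bound on the image radius \emph{uniformly} in $z$ is delicate, since the local model is valid only on a neighborhood whose size does not shrink with $\delta$, yet the gluing step must not reintroduce an error of larger order than $\delta^{\var_m}$. This sharp accounting --- simultaneously for $m=0$ and $m=1$ and for all $\var\in(0,1]$ --- is where I expect the bulk of the technical work of \cite{NT} to lie, the role of higher boundary regularity being precisely to refine the order of the Taylor remainder $R$ and hence the exponent $\var_m=(m+\var)/2$.
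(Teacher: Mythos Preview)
The paper does not supply its own proof of this proposition: it records the result as \cite[Theorem~1]{NT} and remarks only that ``the proof of \cite[Theorem~1]{NT} follows the same lines as that of \cite[Theorem~1.1]{FW1}, where the cases of $\CC^3$-smooth and $\CC^4$-smooth bounded strongly pseudoconvex domains are considered.'' Your outline \emph{is} the Forn\ae ss--Wold scaling scheme---local normal form at the nearest boundary point, anisotropic dilation to the Siegel model, Cayley transform to $\B_n$, and globalization via the Stein neighborhood basis through \cite[Lemma~3.1]{FW1}---so you are describing exactly the approach the paper attributes to \cite{NT}.

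One technical caution on your normal-form line: writing $R(w)=O(|w|^{m+2+\var})$ presupposes that \emph{all} terms of degree $\le m+2$ can be absorbed into $-\operatorname{Re}w_n+|w'|^2$ by a holomorphic change of variables. For $m=1$ there are genuine cubic terms (e.g.\ of type $w_j w_k\bar w_l$ in $w'$) that survive any holomorphic normalization; under your dilation these scale like $\delta^{1/2}$, which would be worse than $\delta^{\var_m}=\delta^{(1+\var)/2}$. In \cite{FW1,NT} this is handled not by eliminating such terms but by matching them against the Cayley image and absorbing them into the $O(\delta^{\var_m})$ comparison with $\B_n$ (or, equivalently, by a $z$-dependent quadratic correction before scaling). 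You correctly flag the quantitative globalization as the main labor; the careful bookkeeping of the residual low-order terms is the other place where the exponent $\var_m$ has to be protected.
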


The proof of \cite[Theorem 1]{NT} follows the same lines as that of
\cite[Theorem 1.1]{FW1}, where the cases of $\CC^3$-smooth
and $\CC^4$-smooth bounded strongly pseudoconvex domains are considered.

In the $\CC^{3,1}$-smooth case, Proposition \ref{ests} says that
$$\limsup_{z\to p_0}\frac{1-s_D(z)}{\delta_D(z)}<\infty.$$

This estimate is optimal. Indeed, it follows from the proof of \cite[Theorem 1.2]{FW2} that
if $p_0$ is a $\CC^2$-smooth boundary point of a domain $D\subset\C^n$ and
$$\liminf_{z\to p_0}\frac{1-s_D(z)}{\delta_D(z)}=0,$$
then $D\sim\B_n.$

We will prove more in a short way.

\begin{prop}\label{ball} Let $p_0$ be a Dini-smooth boundary point\footnote{For this notation see
e.g. \cite[p. 45]{NA}. Note that $\CC^{1,\var}$- $\Rightarrow$ Dini-
$\Rightarrow$ $\CC^1$-smoothness.}
of a domain $D\subset\C^n.$ If
$$\liminf_{z\to p_0}\frac{1-s_D(z)}{\delta_D(z)}=0,$$
then $D\sim\B_n.$
\end{prop}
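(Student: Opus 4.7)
The plan is to promote the extremal maps $f_j$ realizing $s_D(z_j)$ to injective maps $\tilde F_j\in\mathcal O(\B_n,D)$ fixing a chosen interior point $w_0\in D$ at the origin, and to show that a locally uniform limit of a subsequence is a biholomorphism $\B_n\to D$.

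Pick $z_j\to p_0$ with $r_j:=s_D(z_j)$ satisfying $1-r_j=o(\delta_D(z_j))$, and injective extremals $f_j\in\mathcal O(D,\B_n)$ with $f_j(z_j)=0$ and $r_j\B_n\subset f_j(D)$. Define $F_j(\zeta):=f_j^{-1}(r_j\zeta)$, an injective holomorphic map $\B_n\to D$ with $F_j(0)=z_j$; exactly as in the proof of Proposition~\ref{ineq}, $F_j(\B_n)\supset B^k_D(z_j,\tanh^{-1}r_j)$. Fix $w_0\in D$. The Dini-smoothness of $p_0$ yields the classical upper bound $k_D(z_j,w_0)\le\tfrac12\log(1/\delta_D(z_j))+C$ (cf.\ \cite{NA}); combined with the hypothesis,
$$\tanh^{-1}r_j-k_D(z_j,w_0)\ge\tfrac12\log\bigl(\delta_D(z_j)/(1-r_j)\bigr)+O(1)\longrightarrow+\infty.$$
Hence for every $R>0$ and every $j$ large, $B^k_D(w_0,R)\subset B^k_D(z_j,\tanh^{-1}r_j)\subset F_j(\B_n)$.

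In particular $w_0\in F_j(\B_n)$ for $j$ large; put $a_j:=F_j^{-1}(w_0)\in\B_n$ and choose $\phi_{a_j}\in\mathrm{Aut}(\B_n)$ with $\phi_{a_j}(0)=a_j$. Then $\tilde F_j:=F_j\circ\phi_{a_j}\in\mathcal O(\B_n,D)$ is injective, $\tilde F_j(0)=w_0$, and $\tilde F_j(\B_n)=F_j(\B_n)$. Since $s_D(z_j)>0$, $D$ is biholomorphic to a bounded domain, so $\{\tilde F_j\}$ is a normal family in such a realization; passing to a subsequence, $\tilde F_j\to\tilde F\in\mathcal O(\B_n,\overline D)$ locally uniformly with $\tilde F(0)=w_0\in D$. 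For $w\in D$ and any $R>k_D(w_0,w)$, $w\in B^k_D(w_0,R)\subset\tilde F_j(\B_n)$ for $j$ large, so $w=\tilde F_j(b_j)$ for some $b_j\in\B_n$. As $\tilde F_j$ is a biholomorphism onto its image,
$$k_{\B_n}(0,b_j)=k_{\tilde F_j(\B_n)}(w_0,w)\le k_{B^k_D(w_0,R)}(w_0,w)<\infty,$$
uniformly in $j$, so $\{b_j\}$ is relatively compact in $\B_n$ and any limit point $b$ satisfies $\tilde F(b)=w$. Hence $\tilde F(\B_n)\supset D$.

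By Hurwitz in $\mathbb C^n$, $\tilde F$ is injective (the degenerate-Jacobian alternative is ruled out since $\tilde F(\B_n)\supset D$ has full dimension). Then $\tilde F(\B_n)$ is open in $\mathbb C^n$, sandwiched between $D$ and $\overline D$, forcing $\tilde F(\B_n)=D$; thus $\tilde F:\B_n\to D$ is the desired biholomorphism. The main obstacle is the displayed estimate $\tanh^{-1}r_j-k_D(z_j,w_0)\to+\infty$, which couples the quantitative hypothesis with the Dini-smooth upper bound on $k_D$ near $p_0$; everything after it is a standard normal-family argument.
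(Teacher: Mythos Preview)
Your proof is correct, and shares with the paper's proof the crucial step: coupling the hypothesis $1-r_j=o(\delta_D(z_j))$ with the Dini-smooth upper bound $k_D(w_0,z_j)\le -\tfrac12\log\delta_D(z_j)+C$ from \cite{NA} to obtain $\tanh^{-1}r_j-k_D(w_0,z_j)\to\infty$, and hence that every Kobayashi ball $B^k_D(w_0,R)$ is eventually contained in the image $F_j(\B_n)$ of an injective map from $\B_n$.

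The difference lies in the endgame. The paper stops here: the last conclusion says precisely that $h^k_D(w_0)=1$, and Fridman's theorem \cite[Theorem~1.3(2)]{Fr2} (already recalled after Proposition~\ref{ineq}) gives $D\sim\B_n$ immediately. You instead recenter the $F_j$ at $w_0$, pass to a locally uniform limit in a bounded realization, and run a Hurwitz/connectedness argument to produce the biholomorphism directly. This is essentially a self-contained proof of the relevant case of Fridman's theorem. What you gain is independence from \cite{Fr2}; what the paper gains is brevity---its proof is three lines once the displayed estimate is in hand. One small point of phrasing: your limit $\tilde F$ really lives in the bounded realization $G$ (where Montel applies), and the sandwich $D\subset\tilde F(\B_n)\subset\overline D$ forces equality because $\tilde F(\B_n)$ is \emph{connected} and $D$ is clopen in it; you use this implicitly.
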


\begin{proof} One may find points $z_j\to p_0$ and injective maps $f_j\in\mathcal O(\B_n,D)$
such that
\[
B_D^k(z_j,r_j)\subset f_j(\B_n)
\]
and
$$\frac{1-\tanh r_j}{\delta_D(z_j)}\to 0.$$ Fix an $a\in D.$
By \cite[Theorem 7]{NA}, there exists a constant $c>0$ with
$$2k_D(a,z_j)\le-\log\delta_D(z_j)+c.$$
Then $r'_j=r_j-k_D(a,z_j)\to\infty$ and $B_D^k(a,r'_j)\subset B_D^k(z_j,r_j)\subset f_j(\B_n).$
It follows that $h^k_D(a)=$ and hence $D\sim B_n.$
\end{proof}

Finally, note that Proposition \ref{ineq} implies that the estimate from Proposition \ref{ests} holds
for $h^k_D,$ too. It turns out that such an estimate for $h^k_D$ is valid without global assumptions about
$D$ (which is not true for $s_D$).

\begin{prop}\label{esth}\cite[Theorem 1]{NT} Let $p_0$ be a $\CC^{m+2,\var}$-smooth
strongly pseudoconvex boundary point of a domain $D\subset\C^n.$ Then
$$\limsup_{z\to p_0}\frac{1-h^k_D(z)}{\delta_D(z)^{\var_m}}<\infty.$$
\end{prop}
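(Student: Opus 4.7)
The plan is to reduce to the global setting of Proposition \ref{ests} by localizing at $p_0$ and then transferring the resulting bound, via a quantitative refinement of Proposition \ref{lock}, from a local model domain back to $D.$

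\medskip

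First, I would construct a bounded $\CC^{m+2,\var}$-smooth strongly pseudoconvex domain $G\subset D$ such that $\partial G$ coincides with $\partial D$ on some neighborhood $V$ of $p_0.$ This is a standard bumping of a local strictly plurisubharmonic defining function of $D$ at $p_0$: modify it outside a small neighborhood of $p_0$ so that the new zero set is a compact smooth strongly pseudoconvex hypersurface bounding a subdomain of $D.$ Since $G$ is bounded and strongly pseudoconvex, $\overline G$ admits a Stein neighborhood basis, so Proposition \ref{ests} applies to $G$ and yields
$$\limsup_{z\to p_0}\frac{1-s_G(z)}{\delta_G(z)^{\var_m}}<\infty.$$
Using $\delta_G(z)=\delta_D(z)$ on $V$ and the inequality $s_G\le h^k_G$ from Proposition \ref{ineq}, this gives
$$\limsup_{z\to p_0}\frac{1-h^k_G(z)}{\delta_D(z)^{\var_m}}<\infty.$$

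\medskip

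The second and crucial step is to upgrade this estimate on $h^k_G$ to one on $h^k_D.$ Fix an injective $f\in\mathcal O(\B_n,G)$ realizing a large ball $B^k_G(z,r)\subset f(\B_n);$ viewed as an element of $\mathcal O(\B_n,D),$ it is a valid competitor in the definition of $h^k_D(z),$ and one wants $B^k_D(z,r')\subset f(\B_n)$ with $r'$ close to $r.$ Since $G\subset D$ gives only $k_D\le k_G$ (the wrong direction), what is required is a quantitative reverse estimate
$$k_G(z,w)\le k_D(z,w)+O(\delta_D(z)^{\var_m})$$
valid for $z$ near $p_0$ and $w$ in a $k_D$-ball of radius comparable to $-\log\delta_D(z)$ at $z.$ Granted this, $B^k_D(z,r')\subset B^k_G(z,r)\subset f(\B_n)$ with $r-r'=O(\delta_D(z)^{\var_m}),$ and applying $\tanh$ converts the additive radius loss into a multiplicative error of the correct order in $1-h^k_D(z).$

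\medskip

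I expect the sharp Kobayashi comparison in Step 2, with rate exactly $\delta_D^{\var_m},$ to be the main obstacle; it is precisely the kind of estimate established in \cite{NT} (extending \cite{FW1}), relying on Graham's boundary expansion of $\kappa_D$ at a $\CC^{m+2,\var}$-smooth strongly pseudoconvex point and on the fact that $k_D$-geodesics emanating from $z$ remain in a horoball at $p_0$ on which $\kappa_D$ and $\kappa_G$ coincide up to the same error. Once this comparison is available, the rest of the argument is essentially bookkeeping, and it fulfills the assertion that, in contrast to $s_D,$ no global hypothesis on $D$ is needed.
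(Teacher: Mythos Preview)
Your overall architecture---localize to a bounded strongly pseudoconvex model, apply Proposition~\ref{ests} together with Proposition~\ref{ineq}, then transfer the bound back to $D$ via a reverse Kobayashi comparison---is exactly the paper's. The difference lies in Step~2, and it is worth pointing out because it is where the paper's argument is much simpler than you anticipate.

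You ask for the sharp comparison $k_G(z,w)\le k_D(z,w)+O(\delta_D(z)^{\var_m})$ and flag it as the main obstacle, expecting to need the boundary expansions of \cite{NT,FW1}. In fact a \emph{constant} additive loss is enough: since $1-\tanh(r-c)\asymp e^{2c}\bigl(1-\tanh r\bigr)$, losing $O(1)$ in the radius only multiplies $1-h^k_D$ by a constant. The paper therefore uses the elementary estimate $k_{D\cap U}(z,w)\le k_D(z,w)+c$ for $z,w$ near $p_0$, which follows from \cite{BB} (the Kobayashi distance near a strongly pseudoconvex point is determined, up to bounded additive error, by the local geometry of $\partial D$). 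Combined with the Forstneri\v c--Rosay lower bound \cite{FR} ensuring that $B^k_D(z,r(z)-c)\subset D\cap V$, one gets $B^k_D(z,r(z)-c)\subset B^k_{D\cap U}(z,r(z))$ and the conclusion follows. So your proposal is correct in spirit, but the ``crucial step'' you isolate is in fact classical and requires no rate; the $\var_m$ enters only through Proposition~\ref{ests} applied to the local model.
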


\begin{proof} It follows by the proof of \cite[Theorem 1]{BB} that the behavior of $k_D$ near $p_0$
is determined up to small/large additive constants by the local geometry of $\partial D$ near $p_0.$
Then there exist neighborhoods $V\subset U$ of $p_0$ and a constant $c>0$
such that $D\cap U$ to be a strongly pseudoconvex domain and
$$k_{D\cap U}(z,w)\le k_D(z,w)+c,\quad z,w\in D\cap V.$$
On the other hand, we may shrink $V$ and enlarge $c$ such that
$$\tanh^{-1}h^k_{D\cap U}(z)>r(z):=-\frac{1}{2}\log\delta_D(z)-c,\quad z\in D\cap V$$
(by Propositions \ref{ineq} and \ref{ests}). Increasing $c$ once more, one may find
a neigh\-borhood $W\subset V$ of $p_0$ such that
$$k_D(z,w)\ge r(z)-c,\quad z\in D\cap W,\ w\in D\setminus V$$
(by \cite[2.3. Theorem]{FR}). Then
$$B^k_D(z,r(z)-c)\subset B^k_{D\cap U}(z,r(z)),\quad w\in D\cap W,$$
which implies the desired inequality.\end{proof}

\end{document}